\newtheorem{theorem}{Theorem}[section]
\newtheorem{lemma}[theorem]{Lemma}
\theoremstyle{definition}
\newtheorem{definition}[theorem]{Definition}
\numberwithin{equation}{section}
\def \E{\mbox{$ I\!\! E$}}
\def \M{\mbox{$ I\!\! M$}}
\def \ksW{K.~S.~Williams}
\def \nK{N.~Koblitz}
\author{Ay\c{s}e Alaca, \c{S}aban Alaca and Eb\'{e}n\'{e}zer Ntienjem}
\address{
Centre for Research in Algebra and Number Theory \\
School of Mathematics and Statistics\\
Carleton University\\
1125 Colonel By Drive\\
Ottawa, Ontario, K1S 5B6, Canada
}
\email{aalaca@math.carleton.ca}
\email{salaca@math.carleton.ca}
\email{ebenezer.ntienjem@carleton.ca;ntienjem@gmail.com}
\keywords{Sums of Divisors function; Convolution Sums; Dedekind eta function; 
Modular Forms; Eisenstein Series; Cusp Forms; Octonary quadratic Forms; 
Number of Representations}
\subjclass[2010]{11A25, 11E20, 11E25, 11F11, 11F20, 11F27}
\begin{document}

\title[Evaluation of the Convolution Sum for 14, 22 and 26]
{Evaluation of the Convolution Sum involving the Sum of Divisors Function 
for 14, 22 and 26
}

\begin{abstract}
For all natural numbers $n$, we discuss the evaluation of the convolution sum, 
$\underset{\substack{
 {(l,m)\in\mathbb{N}_{0}^{2}} \\ {\alpha\,l+\beta\,m=n}
} }{\sum}\sigma(l)\sigma(m)$, 
where $\alpha\beta=14,22,26$. 
We generalize the extraction of the convolution sum using Eisenstein 
forms of weight $4$ for all pairs of positive integers $(\alpha,\beta)$.
We also determine formulae for the number of representations of a 
positive integer by the octonary quadratic forms
$a\,(x_{1}^{2} + x_{2}^{2} + x_{3}^{2} + x_{4}^{2})+ b\,(x_{5}^{2} + x_{6}^{2}
  + x_{7}^{2} + x_{8}^{2})$, 
where $(a,b)= (1,1),(1,3),(1,9),(2,3)$. These numbers of representations of 
a positive integer are applications of the evaluation of certain convolution 
sums by J.~G.~Huard et al.\ \cite{huardetal}, A.~Alaca et al.\  
\cite{alaca_alaca_williams2006,alaca_alaca_williams2007a} and D.~Ye 
\cite{ye2015}.
\end{abstract}

\maketitle

\section{Introduction} \label{introduction}

Let $\mathbb{N}$, $\mathbb{N}_{0}$, $\mathbb{Z}$, $\mathbb{Q}$, $\mathbb{R}$ 
and $\mathbb{C}$ denote the sets of
positive integers, non-negative integers, integers, rational numbers, real 
numbers and complex numbers, respectively. 

Let $k, n\in\mathbb{N}$. The sum of positive divisors of $n$ to the power 
of $k$, $\sigma_{k}(n)$, is defined by 
\begin{equation} \label{def-sigma_k-n}
 \sigma_{k}(n) = \sum_{0<d|n}d^{k}.
 \end{equation} 
We  write $\sigma(n)$ as a synonym for $\sigma_{1}(n)$. 
For $m\notin\mathbb{N}$ we set $\sigma_{k}(m)=0$. 

Let $\alpha,\beta\in\mathbb{N}$ be such that $\alpha\leq\beta$.
The convolution sum, 
 $W_{(\alpha,\beta)}(n)$, is defined by 
 \begin{equation} \label{def-convolution_sum}
 W_{(\alpha, \beta)}(n) =  \sum_{\substack{
 {(l,m)\in\mathbb{N}_{0}^{2}} \\ {\alpha\,l+\beta\,m=n}
} }\sigma(l)\sigma(m).
 \end{equation}
We write $W_{\beta}(n)$ as a synonym for $W_{(1,\beta)}(n)$.

The values of $(\alpha,\beta)$ for those convolution sums 
$W_{(\alpha, \beta)}(n)$ that have so far been evaluated are given in 
\autoref{introduction-table-1}. 
We discuss the evaluation of the convolution sum for $\alpha\beta=14,22,26$,
i.e., $(\alpha,\beta)=(2,7),(1,22),(2,11),(1,26),(2,13)$. These convolution 
sums have not been evaluated yet as one can notice from 
\autoref{introduction-table-1}. We also discuss the generalization of the 
extraction of the convolution sum using Eisenstein forms of weight $4$ 
for all pairs of positive integers $(\alpha,\beta)$. 

As an application, convolution sums are used to
determine explicit formulae for the number of representations of a positive 
integer $n$ by the octonary quadratic forms 
\begin{equation} \label{introduction-eq-1}
a\,(x_{1}^{2} +x_{2}^{2} + x_{3}^{2} + x_{4}^{2})+ b\,(x_{5}^{2} + x_{6}^{2} + 
x_{7}^{2} + x_{8}^{2}),
\end{equation}
and 
\begin{equation} \label{introduction-eq-2}
c\,(x_{1}^{2} + x_{1}x_{2} + x_{2}^{2} + x_{3}^{2} + x_{3}x_{4} + x_{4}^{2})
+ d\,(x_{5}^{2} + x_{5}x_{6}+ x_{6}^{2} + x_{7}^{2} + x_{7}x_{8}+ 
x_{8}^{2}),
\end{equation}
respectively, where $a,b,c,d\in \mathbb{N}$. 

So far known explicit formulae for the number of representations 
of $n$ by the octonary form \autoref{introduction-eq-1} are 
referenced in \autoref{introduction-table-rep2}. 
We determine formulae for the number of representations 
of a positive integer $n$ by the octonary quadratic form
\autoref{introduction-eq-1} for which $(a,b)=(1,1),
(1,3),(2,3),(1,9)$. These new results 
are applications of the evaluation of some convolution sums by 
J.~G.~Huard et al.\ \cite{huardetal}, A.~Alaca et al.\    
\cite{alaca_alaca_williams2006,alaca_alaca_williams2007a} and D.~Ye 
\cite{ye2015}. 

This paper is organized in the following way. 
In \hyperref[modularForms]{Section \ref*{modularForms}} we discuss 
modular forms, briefly define eta functions and convolution sums, and prove  
the generalization of the extraction of the convolution sum.  
Our main results on the evaluation of the convolution sums are discussed in 
\hyperref[convolution_14_22_26]{Section \ref*{convolution_14_22_26}}. 
The determination of formulae for the number of representations of a positive 
integer $n$ is discussed in 
\hyperref[representations_e_f]{Section \ref*{representations_e_f}}. 
A brief outlook is 
given in \hyperref[conclusion]{Section \ref*{conclusion}}.

Software for symbolic scientific computation is used to obtain the results 
of this paper. This software comprises the open source software packages 
\emph{GiNaC}, \emph{Maxima}, \emph{REDUCE}, \emph{SAGE} and the commercial 
software package \emph{MAPLE}.


\section{Modular Forms and Convolution Sums} \label{modularForms}

Let $\mathbb{H}$ be the upper half-plane, that is  
$\mathbb{H}=\{ z\in \mathbb{C}~ | ~\text{Im}(z)>0\}$, 
and let  $G=\text{SL}_{2}(\mathbb{R})$ be the group of $2\times 2$-matrices 
\begin{math}\left(\begin{smallmatrix} a & b \\ c &
 d \end{smallmatrix}\right)\end{math} such that $a,b,c,d\in\mathbb{R}$ and 
$ad-bc=1$ hold. Let furthermore $\Gamma=\text{SL}_{2}(\mathbb{Z})$ be a 
subset of $\text{SL}_{2}(\mathbb{R})$. 
Let $N\in\mathbb{N}$. Then  
\begin{eqnarray*}
\Gamma(N) & = \bigl\{~\left(\begin{smallmatrix} a & b \\ c &
  d \end{smallmatrix}\right)\in\text{SL}_{2}(\mathbb{Z})~ |
  ~\left(\begin{smallmatrix} a & b \\ c &
 d\end{smallmatrix}\right)\equiv\left(\begin{smallmatrix} 1 & 0 \\ 0 & 1 
 \end{smallmatrix}\right) \pmod{N} ~\bigr\}
\end{eqnarray*}
is a subgroup of $G$ and is called a \emph{principal congruence subgroup of 
level N}. A subgroup $H$ of $G$ is called a \emph{congruence subgroup of 
level N} if it contains $\Gamma(N)$.

Relevant for our purposes is the congruence subgroup
 \begin{align*}
\Gamma_{0}(N) & = \bigl\{~\left(\begin{smallmatrix} a & b \\ c &
  d \end{smallmatrix}\right)\in\text{SL}_{2}(\mathbb{Z})~ | ~
   c\equiv 0 \pmod{N} ~\bigr\}.
\end{align*}
Let $k,N\in\mathbb{N}$ and let $\Gamma'\subseteq\Gamma$ be a congruence 
subgroup of level $N\in\mathbb{N}$. 
Let $k\in\mathbb{Z}, \gamma\in\text{SL}_{2}(\mathbb{Z})$ and $f : 
\mathbb{H}\cup\mathbb{Q}\cup\{\infty\} \rightarrow 
\mathbb{C}\cup\{\infty\}$. 
We denote by 
$f^{[\gamma]_{k}}$ the function whose value at $z$ is $(cz+d)^{-
k}f(\gamma(z))$, i.e., $f^{[\gamma]_{k}}(z)=(cz+d)^{-k}f(\gamma(z))$. 
The following definition is according to \nK\ \cite[p. 108]{koblitz-1993}.
\begin{definition} \label{modularForms-defin-2}
Let $N\in\mathbb{N}$, $k\in\mathbb{Z}$, $f$ be a meromorphic function 
on $\mathbb{H}$ and $\Gamma'\subset\Gamma$ a congruence subgroup of 
level $N$. 
\begin{enumerate}
\item[(a)] $f$ is called a \emph{modular function of weight $k$} for 
$\Gamma'$ if
\begin{enumerate}
\item[(a1)] for all $\gamma\in\Gamma'$ it holds that $f^{[\gamma]_{k}}=f$.
\item[(a2)] for any $\delta\in\Gamma$ it holds that $f^{[\delta]_{k}}(z)$ 
can be expressed in the form 
$\underset{n\in\mathbb{Z}}{\sum}a_{n}e^{\frac{2\pi i z n}{N}}$, 
wherein $a_{n}\neq 0$ for finitely many $n\in\mathbb{Z}$ such that $n<0$. 
\end{enumerate}
\item[(b)] $f$ is called a \emph{modular form of weight $k$} for 
$\Gamma'$ if
	\begin{enumerate}
	\item[(b1)] $f$ is a modular function of weight $k$ for $\Gamma'$,
	\item[(b2)] $f$ is holomorphic on $\mathbb{H}$,
	\item[(b3)] for all $\delta\in\Gamma$ and for all $n\in\mathbb{Z}$ 
such that $n<0$ it holds that $a_{n}=0$.
	\end{enumerate}
\item[(c)] $f$ is called a \emph{cusp form of weight $k$ for $\Gamma'$} if
	\begin{enumerate}
	\item[(c1)] $f$ is a modular form of weight $k$ for $\Gamma'$, 
	\item[(c2)] for all $\delta\in\Gamma$ it holds that $a_{0}=0$.
	\end{enumerate}
\end{enumerate}
\end{definition}
For $k,N\in\mathbb{N}$, let $\M_{k}(\Gamma_{0}(N))$ be the space of 
modular forms of weight $k$ for 
$\Gamma_{0}(N)$, $\EuScript{S}_{k}(\Gamma_{0}(N))$ be the subspace of 
cusp forms of weight $k$ for $\Gamma_{0}(N)$, 
and $\E_{k}(\Gamma_{0}(N))$ be the subspace of Eisenstein forms of 
weight $k$ for $\Gamma_{0}(N)$. 
Then it is proved in W.~A.~Stein's book (online version) \cite[p.~81]{wstein} that 
$\M_{k}(\Gamma_{0}(N)) =
\E_{k}(\Gamma_{0}(N))\oplus\EuScript{S}_{k}(\Gamma_{0}(N))$.  

As noted in Section 5.3 of W.~A.~Stein's book \cite[p.~86]{wstein} 
if the primitive Dirichlet 
characters are trivial and $2\leq k$ is even, then 
$E_{k}(q) = 1 - \frac{2k}{B_{k}}\,\underset{n=1}{\overset{\infty}{\sum}}\,
\sigma_{k-1}(n)\,q^{n}$, where $B_{k}$ are the Bernoulli numbers.

For the purpose of this paper we only consider trivial primitive Dirichlet 
characters and $k\geq 4$. Theorems 5.8 and 5.9 in Section 5.3 of 
\cite[p.~86]{wstein} also hold for this special case.

\subsection{Eta Functions}  \label{etaFunctions}

The Dedekind eta function, $\eta(z)$,  
is defined on the upper half-plane $\mathbb{H}$ by
$\eta(z) = e^{\frac{2\pi i z}{24}}\overset{\infty}{\underset{n=1}{\prod}}(1 - e^{2\pi i n z})$.
We set $q=e^{2\pi i z}$. Then 
$\eta(z) = q^{\frac{1}{24}}\overset{\infty}{\underset{n=1}{\prod}}(1 - q^{n}) = q^{\frac{1}{24}} F(q)$,
where $F(q)=\overset{\infty}{\underset{n=1}{\prod}}(1 - q^{n})$.

The Dedekind eta function was systematically used by M.~Newman 
\cite{newman_1957,newman_1959} to construct modular forms for 
$\Gamma_{0}(N)$. M.~Newman then determined when a function $f(z)$ is a 
modular form for $\Gamma_{0}(N)$ by providing conditions (i)-(iv) in 
the following theorem. The order of vanishing of an eta function 
at the cusps of $\Gamma_{0}(N)$, which is condition (v) or (v$'$) in 
the following theorem, was determined by G.~Ligozat \cite{ligozat_1975}.

The following theorem is proved in L.~J.~P.~Kilford's book 
\cite[p.~99]{kilford} and G.~K\"{o}hler's book \cite[p.~37]{koehler}; 
we will apply that theorem to determine eta functions, $f(z)$, which 
belong to $\M_{k}(\Gamma_{0}(N))$, and especially those eta functions 
which are in $\EuScript{S}_{k}(\Gamma_{0}(N))$. 
\begin{theorem}[M.~Newman and G.~Ligozat] \label{ligozat_theorem} 
Let $N\in \mathbb{N}$  and  
$f(z)=\overset{}{\underset{1\leq\delta|N}{\prod}}\eta^{r_{\delta}}(\delta z)$ 
be an eta function which satisfies the following conditions: 

\begin{tabular}{llll}
{\textbf{(i)}} & $\overset{}{\underset{1\leq\delta|N}{\sum}}\delta\,r_{\delta} 
	\equiv 0 \pmod{24}$, & 
{\textbf{(ii)}} &  $\overset{}{\underset{1\leq\delta|N}{\sum}}\frac{N}{\delta} 
	\,r_{\delta} \equiv 0 \pmod{24}$, \\
{\textbf{(iii)}} &  $\overset{}{\underset{1\leq\delta|N}
{\prod}}\delta^{r_{\delta}}$ \text{ is a square in } $\mathbb{Q}$, & 
{\textbf{(iv)}} &  $k=\frac{1}{2}\overset{}{\underset{1\leq\delta|N}
	{\sum}}r_{\delta}$ \text{ is an even integer,} \\
\end{tabular}

{\textbf{(v)}} \text{ for each positive divisor $d$ of $N$ it holds that } 
	$\overset{}{\underset{1\leq\delta|N}{\sum}}\frac{\gcd{(\delta,d)}^{2}}	
	{\delta}\,r_{\delta} \geq 0$. 

Then $f(z)\in\M_{k}(\Gamma_{0}(N))$. 

The eta quotient $f(z)$ belongs to $\EuScript{S}_{k}(\Gamma_{0}(N))$ 
if ${\textbf{(v)}}$ is replaced by
 
{\textbf{(v')}} \text{ for each positive divisor $d$ of $N$ it 
holds that } 
$\overset{}{\underset{1\leq\delta|N}{\sum}}\frac{\gcd{(\delta,d)}^{2}}	
{\delta} r_{\delta} > 0$.
\end{theorem}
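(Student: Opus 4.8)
The plan is to verify the three defining properties from \autoref{modularForms-defin-2}: weight-$k$ automorphy under $\Gamma_{0}(N)$, holomorphy on $\mathbb{H}$, and the prescribed behaviour at the cusps (holomorphy for the modular-form claim, vanishing for the cusp-form claim). Holomorphy on $\mathbb{H}$ is immediate and costs nothing: since $\eta(\delta z)\neq 0$ for every $z\in\mathbb{H}$, each factor $\eta^{r_{\delta}}(\delta z)$ is holomorphic and nonvanishing on $\mathbb{H}$ irrespective of the sign of $r_{\delta}$, so $f$ is holomorphic on $\mathbb{H}$. The real content is thus the automorphy and the cusp analysis.

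For the automorphy I would first reduce the transformation of each factor $\eta(\delta z)$ to the classical transformation law of $\eta$ under $\mathrm{SL}_{2}(\mathbb{Z})$. Fix $\gamma=\left(\begin{smallmatrix} a & b \\ c & d\end{smallmatrix}\right)\in\Gamma_{0}(N)$ and $\delta\mid N$. Because $\delta\mid N\mid c$, the matrix $\gamma_{\delta}=\left(\begin{smallmatrix} a & b\delta \\ c/\delta & d\end{smallmatrix}\right)$ has integer entries and determinant $ad-bc=1$, hence lies in $\mathrm{SL}_{2}(\mathbb{Z})$, and one checks directly that $\gamma_{\delta}(\delta z)=\delta\,\gamma z$. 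Applying the Dedekind transformation formula $\eta(\gamma' w)=\varepsilon(\gamma')\,(c'w+d')^{1/2}\eta(w)$, valid for $\gamma'=\left(\begin{smallmatrix} a' & b' \\ c' & d'\end{smallmatrix}\right)\in\mathrm{SL}_{2}(\mathbb{Z})$, with $\gamma'=\gamma_{\delta}$ and $w=\delta z$, gives $\eta(\delta\gamma z)=\varepsilon(\gamma_{\delta})\,(cz+d)^{1/2}\eta(\delta z)$, where $\varepsilon(\gamma_{\delta})$ is the eta-multiplier (a $24$-th root of unity). Taking the product over $\delta\mid N$ and using $k=\tfrac12\sum_{\delta}r_{\delta}$ from (iv) yields
\[
f(\gamma z)=\Big(\prod_{\delta\mid N}\varepsilon(\gamma_{\delta})^{r_{\delta}}\Big)(cz+d)^{k}f(z),
\]
the even integrality of $k$ in (iv) being what makes the various branch/phase factors in the $\varepsilon(\gamma_{\delta})$ combine consistently. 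Since $k$ is an integer the factors $(cz+d)^{k}$ satisfy the cocycle relation exactly, so $\gamma\mapsto\prod_{\delta}\varepsilon(\gamma_{\delta})^{r_{\delta}}$ is a genuine character of $\Gamma_{0}(N)$; condition (a1) therefore holds precisely when this character is trivial, and it suffices to check triviality on a generating set.

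The heart of the argument — and the step I expect to be the main obstacle — is showing that conditions (i)--(iii) force this character to be trivial. Invariance under $T=\left(\begin{smallmatrix}1&1\\0&1\end{smallmatrix}\right)$ is elementary: from $\eta(\delta(z+1))=e^{\pi i\delta/12}\eta(\delta z)$ the $T$-multiplier is $\exp\!\big(\tfrac{\pi i}{12}\sum_{\delta}\delta\,r_{\delta}\big)$, which equals $1$ exactly when $\sum_{\delta}\delta\,r_{\delta}\equiv 0\pmod{24}$, i.e.\ condition (i). For the remaining generators, whose lower-left entry is a nonzero multiple of $N$, I would invoke the explicit Rademacher/Dedekind-sum form of $\varepsilon$, in which the multiplier splits into a quadratic (Jacobi-symbol) part and a Dedekind-sum part. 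Careful bookkeeping of the Dedekind sums modulo $1$ produces the congruence $\sum_{\delta}(N/\delta)\,r_{\delta}\equiv 0\pmod{24}$ of condition (ii), while the Jacobi-symbol part is trivialized exactly by requiring $\prod_{\delta}\delta^{r_{\delta}}$ to be a rational square, which is condition (iii). This computation is the technical core; rather than reproduce it I would cite the treatments in G.~K\"{o}hler's and L.~J.~P.~Kilford's books.

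Finally, for the cusp analysis I would apply G.~Ligozat's order-of-vanishing formula: at the cusp $a/c$ of $\Gamma_{0}(N)$ (in lowest terms, $c\mid N$), the order of $f$ is a fixed positive rational multiple of $\sum_{\delta\mid N}\frac{\gcd(\delta,c)^{2}}{\delta}\,r_{\delta}$. Condition (v) asserts that this sum is $\ge 0$ for every divisor $d=c$ of $N$, so $f$ has nonnegative order at every cusp; together with the automorphy and the holomorphy on $\mathbb{H}$ this establishes (b1)--(b3), whence $f\in\M_{k}(\Gamma_{0}(N))$. Replacing (v) by the strict inequality (v$'$) makes the order strictly positive at every cusp, which is exactly the extra requirement (c2) that the constant term vanish in the expansion at each cusp, yielding $f\in\EuScript{S}_{k}(\Gamma_{0}(N))$.
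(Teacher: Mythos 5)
The first thing to note is that the paper contains no proof of this theorem at all: it is quoted as a known result, with conditions (i)--(iv) attributed to M.~Newman \cite{newman_1957,newman_1959}, the cusp conditions (v)/(v$'$) to G.~Ligozat \cite{ligozat_1975}, and the proof referred to L.~J.~P.~Kilford's book \cite[p.~99]{kilford} and G.~K\"{o}hler's book \cite[p.~37]{koehler}. So there is no internal argument to compare against; your sketch can only be measured against the cited literature, and it does follow that standard route.

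Judged on its own terms, the parts you actually work out are correct: holomorphy on $\mathbb{H}$ from the nonvanishing of $\eta$; the matrix $\gamma_{\delta}=\left(\begin{smallmatrix} a & b\delta \\ c/\delta & d\end{smallmatrix}\right)\in\text{SL}_{2}(\mathbb{Z})$ with $\gamma_{\delta}(\delta z)=\delta\,\gamma z$, which correctly transports the Dedekind transformation law to each factor $\eta(\delta z)$; the identification of the $T$-multiplier $\exp\bigl(\tfrac{\pi i}{12}\sum_{\delta}\delta\,r_{\delta}\bigr)$ with condition (i); and the reading of Ligozat's order formula, whose strictly positive prefactor makes the sign of $\sum_{\delta}\frac{\gcd(\delta,d)^{2}}{\delta}\,r_{\delta}$ the only relevant quantity at each cusp, so that (v) gives $\M_{k}(\Gamma_{0}(N))$ and (v$'$) gives $\EuScript{S}_{k}(\Gamma_{0}(N))$. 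The genuine gap is exactly where you say it is: the claim that conditions (ii) and (iii) trivialize the multiplier character away from $T$ is asserted and then discharged by citing the same two books the paper cites, so the decisive computation is never done. Two cautions about that step. First, $\Gamma_{0}(N)$ has no convenient explicit generating set for general $N$, which is why the standard treatments do not "check generators" but instead evaluate the multiplier of an arbitrary $\gamma$ via the Dedekind-sum/Jacobi-symbol formula, arriving at a character of the form $d\mapsto\left(\frac{(-1)^{k}s}{d}\right)$ with $s=\prod_{\delta}\delta^{r_{\delta}}$. Second, conditions (iii) and (iv) do more than make "branch factors combine": they are precisely what force this residual quadratic character to be identically $1$, and that is the statement requiring proof. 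In sum: correct skeleton, same approach as the sources the paper leans on, but with its core outsourced to citation your attempt is at the same level of completeness as the paper's own treatment --- a referenced result, not a self-contained proof.
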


\subsection{Convolution Sums $W_{(\alpha, \beta)}(n)$}
\label{convolutionSumsEqns}

Recall that for $\alpha,\beta\in\mathbb{N}$ such that $\alpha\leq\beta$, 
the convolution sum, 
 $W_{(\alpha,\beta)}(n)$, is defined by 
 $W_{(\alpha, \beta)}(n) =  \underset{\substack{
 {(l,m)\in\mathbb{N}_{0}^{2}} \\ {\alpha\,l+\beta\,m=n}
} }{\sum}\sigma(l)\sigma(m)$.

As observed by A.~Alaca et al.\ \cite{alaca_alaca_williams2006}, 
we can assume that $\text{gcd}(\alpha,\beta)=1$.  
Let $q\in\mathbb{C}$ be such that $|q|<1$. Then 
the Eisenstein series $L(q)$ and $M(q)$ are defined as follows:
\begin{align}  
L(q) = E_{2}(q) = 1-24\,\sum_{n=1}^{\infty}\sigma(n)q^{n}, 
\label{evalConvolClass-eqn-3} \\
M(q) = E_{4}(q) = 1 + 240\,\sum_{n=1}^{\infty}\sigma_{3}(n)q^{n}. 
\label{evalConvolClass-eqn-4}
\end{align}
The following two relevant results are essential for the sequel of this 
work and are a generalization of the extraction of the convolution sum 
using Eisenstein forms of weight $4$ for all pairs 
$(\alpha, \beta)\in\mathbb{N}^{2}$. 
\begin{lemma}  \label{evalConvolClass-lema-1}
Let $\alpha, \beta \in \mathbb{N}$. Then 
\begin{equation*}
( \alpha\, L(q^{\alpha}) - \beta\, L(q^{\beta}) )^{2}\in
\M_{4}(\Gamma_{0}(\alpha\beta)).
\end{equation*}
\end{lemma}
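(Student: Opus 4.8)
The plan is to reduce the claim to the weight-$2$ quasi-modularity of $E_2$, since $L(q)=E_2(q)$ and, writing $q=e^{2\pi i z}$, we have $L(q^{\alpha})=E_2(\alpha z)$ and $L(q^{\beta})=E_2(\beta z)$. Thus I must analyze
\begin{equation*}
f(z)=\alpha\,E_2(\alpha z)-\beta\,E_2(\beta z).
\end{equation*}
The only obstruction to $E_2$ being a genuine weight-$2$ modular form is its anomalous transformation term: for every $\gamma=\left(\begin{smallmatrix} a & b \\ c & d\end{smallmatrix}\right)\in\mathrm{SL}_2(\mathbb{Z})$ one has
\begin{equation*}
E_2(\gamma z)=(cz+d)^2E_2(z)-\frac{6ic}{\pi}(cz+d).
\end{equation*}
The strategy is to show that for $\gamma\in\Gamma_{0}(\alpha\beta)$ the anomalous terms coming from the two summands of $f$ are identical and hence cancel in the difference, so that $f$ transforms with weight $2$; squaring then yields weight $4$.

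For the main computation, fix $\gamma=\left(\begin{smallmatrix} a & b \\ c & d\end{smallmatrix}\right)\in\Gamma_{0}(\alpha\beta)$, so $\alpha\beta\mid c$ and in particular $c/\alpha,\,c/\beta\in\mathbb{Z}$. The key observation is that the scaled matrix $\gamma_{\alpha}=\left(\begin{smallmatrix} a & \alpha b \\ c/\alpha & d\end{smallmatrix}\right)$ lies in $\mathrm{SL}_2(\mathbb{Z})$, because its determinant is $ad-bc=1$ and its lower-left entry is an integer, and it satisfies $\gamma_{\alpha}(\alpha z)=\alpha\,\gamma z$. Applying the transformation law to $\gamma_{\alpha}$ and using $(c/\alpha)(\alpha z)+d=cz+d$ gives
\begin{equation*}
\alpha\,E_2(\alpha\,\gamma z)=\alpha\,(cz+d)^2E_2(\alpha z)-\frac{6ic}{\pi}(cz+d),
\end{equation*}
where the prefactor $\alpha$ has cleared the $c/\alpha$ in the anomalous term. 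Repeating verbatim with $\gamma_{\beta}=\left(\begin{smallmatrix} a & \beta b \\ c/\beta & d\end{smallmatrix}\right)$ produces the identical anomalous term $-\frac{6ic}{\pi}(cz+d)$. Subtracting the two identities, the anomalous contributions cancel and I obtain $f(\gamma z)=(cz+d)^2f(z)$ for all $\gamma\in\Gamma_{0}(\alpha\beta)$.

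It then follows that $f(z)^2$ satisfies $f(\gamma z)^2=(cz+d)^4f(z)^2$, i.e.\ condition (a1) of weight $4$ for $\Gamma_{0}(\alpha\beta)$. Holomorphy on $\mathbb{H}$ is immediate since $E_2$ is holomorphic there. It remains to check holomorphy and the vanishing of negative Fourier coefficients at every cusp: the $q$-expansion of $f$ has no negative powers of $q$, so $f$ and $f^{2}$ are holomorphic at $\infty$, and the weight-$2$ transformation just established transports this boundedness to the $\Gamma_{0}(\alpha\beta)$-translates of $\infty$, giving finiteness at all cusps. Equivalently, I may invoke the classical fact that $N\,E_2(Nz)-E_2(z)\in\M_{2}(\Gamma_{0}(N))$ and write $f=(\alpha\,E_2(\alpha z)-E_2(z))-(\beta\,E_2(\beta z)-E_2(z))$, whose two pieces lie in $\M_{2}(\Gamma_{0}(\alpha))$ and $\M_{2}(\Gamma_{0}(\beta))$ respectively, hence in $\M_{2}(\Gamma_{0}(\alpha\beta))$ via the inclusions $\Gamma_{0}(\alpha\beta)\subseteq\Gamma_{0}(\alpha),\Gamma_{0}(\beta)$; then $f^{2}\in\M_{4}(\Gamma_{0}(\alpha\beta))$ because weights add under multiplication. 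The main obstacle is the bookkeeping of the quasi-modular anomaly---specifically verifying that $\gamma_{\alpha},\gamma_{\beta}\in\mathrm{SL}_2(\mathbb{Z})$ (which is exactly where $\alpha\beta\mid c$ is used) and that the factors of $\alpha$ and $\beta$ make the two anomalous terms coincide---together with confirming holomorphy at the cusps other than $\infty$.
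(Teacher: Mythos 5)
Your closing ``equivalently'' argument is precisely the paper's proof: the authors write $\alpha\,L(q^{\alpha})-\beta\,L(q^{\beta})=(L(q)-\beta\,L(q^{\beta}))-(L(q)-\alpha\,L(q^{\alpha}))$, invoke Stein's Theorems 5.8 and 5.9 to place each piece $L(q)-t\,L(q^{t})$ in $\M_{2}(\Gamma_{0}(t))$, pass to $\M_{2}(\Gamma_{0}(\alpha\beta))$ via the inclusions $\Gamma_{0}(\alpha\beta)\subseteq\Gamma_{0}(\alpha),\Gamma_{0}(\beta)$, and square. So your proposal does establish the lemma. Your primary route, however, is genuinely different: instead of quoting the classical fact as a black box, you verify the weight-$2$ functional equation directly from the quasi-modular transformation of $E_{2}$, using the scaled matrices $\gamma_{\alpha}=\left(\begin{smallmatrix} a & \alpha b \\ c/\alpha & d\end{smallmatrix}\right)$ and $\gamma_{\beta}=\left(\begin{smallmatrix} a & \beta b \\ c/\beta & d\end{smallmatrix}\right)$. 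That computation is correct, and it buys transparency: it exhibits exactly where $\alpha\beta\mid c$ is used and why the two anomalous terms $-\frac{6ic}{\pi}(cz+d)$ coincide and cancel.

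The gap in that primary route is the treatment of the cusps. Weight-$2$ invariance under $\Gamma_{0}(\alpha\beta)$ together with holomorphy of the $q$-expansion at $\infty$ only controls the cusps lying in the $\Gamma_{0}(\alpha\beta)$-orbit of $\infty$; for $\alpha\beta>1$ the group $\Gamma_{0}(\alpha\beta)$ has several inequivalent cusp classes (the cusp $0$, for instance, is never $\Gamma_{0}(\alpha\beta)$-equivalent to $\infty$), and the paper's definition of a modular form (conditions (a2) and (b3) of Definition~\ref{modularForms-defin-2}) requires that $f^{[\delta]_{2}}$ have no negative Fourier coefficients for \emph{every} $\delta\in\Gamma=\mathrm{SL}_{2}(\mathbb{Z})$, not merely for $\delta\in\Gamma_{0}(\alpha\beta)$. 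So the sentence claiming that the functional equation ``transports this boundedness \ldots\ giving finiteness at all cusps'' does not hold as stated. To close the argument along your lines you would need to expand $f^{[\delta]_{2}}$ at a representative of each cusp class, e.g.\ by factoring $\left(\begin{smallmatrix}\alpha & 0 \\ 0 & 1\end{smallmatrix}\right)\delta=\delta'\,T$ with $\delta'\in\mathrm{SL}_{2}(\mathbb{Z})$ and $T$ upper triangular and applying the anomaly formula to $\delta'$, or by observing that $f(z)=\alpha\,E_{2}^{*}(\alpha z)-\beta\,E_{2}^{*}(\beta z)$, where $E_{2}^{*}(z)=E_{2}(z)-\frac{3}{\pi\,\text{Im}(z)}$ is the weight-$2$ non-holomorphic Eisenstein series, so that $f$ has moderate growth at every cusp. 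Your fallback sidesteps all of this by outsourcing the cusp conditions to the classical fact; as submitted, the proposal is complete only because of that fallback, which coincides with the paper's own proof.
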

\begin{proof} 
If $\alpha=\beta$, then trivially 
$0=(\alpha\, L(q^{\alpha}) - \alpha\,L(q^{\alpha}) )^{2}\in \M_{4}
(\Gamma_{0}(\alpha))$ and there is nothing to prove. 
Therefore, we may suppose that $\alpha\neq\beta>1$ 
in the sequel. 
We apply the result proved by W.~A.~Stein \cite[Thrms 5.8,5.9, p.~86]{wstein} 
to deduce   
$L(q) - \alpha\, L(q^{\alpha})\in \M_{2}(\Gamma_{0}(\alpha))\subseteq 
\M_{2}(\Gamma_{0}(\alpha\beta))$ and  
$L(q) - \beta\, L(q^{\beta}) \in \M_{2}(\Gamma_{0}(\beta))\subseteq
\M_{2}(\Gamma_{0}(\alpha\beta))$. Therefore, 
$$ \alpha\, L(q^{\alpha}) - \beta\,
L(q^{\beta})= (L(q)-\beta\, L(q^{\beta}) ) - (L(q)-\alpha\, L(q^{\alpha})) 
\in \M_{2}(\Gamma_{0}(\alpha\beta))$$ and so 
$(\alpha\, L(q^{\alpha}) - \beta\, L(q^{\beta}) )^{2}\in
\M_{4}(\Gamma_{0}(\alpha\beta))$.
\end{proof}
\begin{theorem} \label{convolutionSum_a_b}
Let $\alpha,\beta\in\mathbb{N}$ be such that
$\alpha$ and $\beta$ are relatively prime and $\alpha < \beta$. 
Then
\begin{multline}
 ( \alpha\, L(q^{\alpha}) - \beta\, L(q^{\beta}) )^{2}  = 
 (\alpha - \beta)^{2} 
    + \sum_{n=1}^{\infty}\biggl(\ 240\,\alpha^{2}\,\sigma_{3}
    (\frac{n}{\alpha}) 
    + 240\,\beta^{2}\,\sigma_{3}(\frac{n}{\beta}) \\ 
    + 48\,\alpha\,(\beta-6n)\,\sigma(\frac{n}{\alpha}) 
    + 48\,\beta\,(\alpha-6n)\,\sigma(\frac{n}{\beta}) 
    - 1152\,\alpha\beta\,W_{(\alpha,\beta)}(n)\,\biggr)q^{n}. 
    \label{evalConvolClass-eqn-11}
\end{multline} 
\end{theorem}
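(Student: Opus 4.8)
The statement is an identity of formal $q$-series, so although the preceding \autoref{evalConvolClass-lema-1} already guarantees that the left-hand side lies in $\M_{4}(\Gamma_{0}(\alpha\beta))$, to pin down the individual coefficients it suffices to expand both sides and compare coefficients of $q^{n}$. The plan is to split the square into its three natural pieces,
$$(\alpha\,L(q^{\alpha}) - \beta\,L(q^{\beta}))^{2} = \alpha^{2}\,L(q^{\alpha})^{2} - 2\alpha\beta\,L(q^{\alpha})L(q^{\beta}) + \beta^{2}\,L(q^{\beta})^{2},$$
and to treat the two ``diagonal'' squares and the mixed product separately. Throughout I would use the convention $\sigma_{k}(n/\alpha)=0$ unless $\alpha\mid n$, so that substituting $q\mapsto q^{\alpha}$ in a series automatically supports it on multiples of $\alpha$.

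For the diagonal squares the one non-formal ingredient is the classical self-convolution (Ramanujan) identity $L(q)^{2} = M(q) + 12\,q\,\tfrac{dL}{dq}$, which, after inserting \eqref{evalConvolClass-eqn-3} and \eqref{evalConvolClass-eqn-4}, reads
$$L(q)^{2} = 1 + 240\sum_{n=1}^{\infty}\sigma_{3}(n)\,q^{n} - 288\sum_{n=1}^{\infty} n\,\sigma(n)\,q^{n}.$$
Replacing $q$ by $q^{\alpha}$ (resp.\ $q^{\beta}$) and multiplying by $\alpha^{2}$ (resp.\ $\beta^{2}$), the coefficient of $q^{n}$ in $\alpha^{2}L(q^{\alpha})^{2}$ becomes $240\,\alpha^{2}\,\sigma_{3}(n/\alpha) - 288\,\alpha\,n\,\sigma(n/\alpha)$, and symmetrically for $\beta$, together with the constants $\alpha^{2}$ and $\beta^{2}$. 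This produces the full $\sigma_{3}$-terms and the $-288\,\alpha\,n$ and $-288\,\beta\,n$ parts of the $\sigma$-coefficients.

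For the mixed term I would multiply the two $L$-series termwise, using $L(q^{\alpha}) = 1 - 24\sum_{n\ge 1}\sigma(n/\alpha)q^{n}$, to obtain
$$L(q^{\alpha})L(q^{\beta}) = 1 - 24\sum_{n=1}^{\infty}\sigma(\tfrac{n}{\alpha})\,q^{n} - 24\sum_{n=1}^{\infty}\sigma(\tfrac{n}{\beta})\,q^{n} + 576\sum_{n=1}^{\infty}\Bigl(\sum_{\alpha l+\beta m=n}\sigma(l)\sigma(m)\Bigr)q^{n}.$$
The crucial observation is that the inner double sum is exactly $W_{(\alpha,\beta)}(n)$ by definition \eqref{def-convolution_sum}; extending the index set to $(l,m)\in\mathbb{N}_{0}^{2}$ is harmless since $\sigma(0)=0$. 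Multiplying by $-2\alpha\beta$ then contributes $-1152\,\alpha\beta\,W_{(\alpha,\beta)}(n)$ together with the leftover $+48\,\alpha\beta\,\sigma(n/\alpha)$ and $+48\,\alpha\beta\,\sigma(n/\beta)$ pieces, and adds $-2\alpha\beta$ to the constant term.

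Finally I would add the three pieces and collect the coefficient of $q^{n}$: the constant is $\alpha^{2} - 2\alpha\beta + \beta^{2} = (\alpha-\beta)^{2}$; the $\sigma_{3}$-coefficients are $240\,\alpha^{2}$ and $240\,\beta^{2}$; the $\sigma(n/\alpha)$-coefficient is $-288\,\alpha\,n + 48\,\alpha\beta = 48\,\alpha\,(\beta-6n)$, and symmetrically $48\,\beta\,(\alpha-6n)$ for $\sigma(n/\beta)$; and the convolution term is $-1152\,\alpha\beta\,W_{(\alpha,\beta)}(n)$. This is precisely \eqref{evalConvolClass-eqn-11}. The only real obstacle is the first step: correctly invoking the self-convolution identity for $L(q)^{2}$ (equivalently the Besge--Glaisher evaluation of $\sum_{l+m=N}\sigma(l)\sigma(m)$); the rest is careful bookkeeping, where the main point of caution is the consistent use of the convention $\sigma_{k}(n/\alpha)=0$ when $\alpha\nmid n$ so that the coefficient comparison is valid for every $n\in\mathbb{N}$.
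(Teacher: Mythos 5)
Your proposal is correct and follows essentially the same route as the paper: expand the square into $\alpha^{2}L^{2}(q^{\alpha})+\beta^{2}L^{2}(q^{\beta})-2\alpha\beta\,L(q^{\alpha})L(q^{\beta})$, apply the classical identity $L^{2}(q)=1+\sum_{n\ge 1}(240\,\sigma_{3}(n)-288\,n\,\sigma(n))q^{n}$ (the paper cites it as Glaisher's; your derivation from $L^{2}=M+12\,q\,dL/dq$ is the same identity) to the diagonal terms, and identify the cross term with $W_{(\alpha,\beta)}(n)$ via the Cauchy product. The bookkeeping, including the conventions $\sigma_{k}(n/\alpha)=0$ for $\alpha\nmid n$ and $\sigma(0)=0$, matches the paper's proof exactly.
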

\begin{proof} 
We observe that 
\begin{multline}
( \alpha\, L(q^{\alpha}) - \beta\, L(q^{\beta}) )^{2}  = 
\alpha^{2}\, L^{2}(q^{\alpha}) + \beta^{2}\,
             L^{2}(q^{\beta}) - 2\,\alpha\beta\, 
             L(q^{\alpha})L(q^{\beta}).
             \label{evalConvolClass-eqn-7}
\end{multline}
J.~W.~L.~Glaisher \cite{glaisher} has proved the following identity  
\begin{equation}
L^{2}(q) = 1+\sum_{n=1}^{\infty}\biggl( 240\,\sigma_{3}(n) - 288\, n\,\sigma(n) \biggr)
q^{n} \label{evalConvolClass-eqn-5}
\end{equation}
which we apply to deduce 
\begin{equation}
L^{2}(q^{\alpha}) = 1+\sum_{n=1}^{\infty}\bigl( 240\,\sigma_{3}(\frac{n}{\alpha}) -
288\,\frac{n}{\alpha}\,\sigma(\frac{n}{\alpha})\bigr) q^{n} \label{evalConvolClass-eqn-8}
\end{equation}
and 
\begin{equation}
L^{2}(q^{\beta}) = 1+\sum_{n=1}^{\infty}\bigl( 240\,\sigma_{3}(\frac{n}{\beta}) -
288\,\frac{n}{\beta}\,\sigma(\frac{n}{\beta})\bigr)q^{n}. \label{evalConvolClass-eqn-9}
\end{equation}
Since  
\begin{multline*}
\bigl(\sum_{n=1}^{\infty}\sigma(\frac{n}{\alpha})q^{n}\bigr)\bigl(\sum_{n=1}^{\infty}\sigma(\frac{n}{\beta})q^{n}\bigr)
  =
\sum_{n=1}^{\infty}\bigl(\sum_{\alpha k + \beta l=n}\sigma(k)\sigma(l)\,\bigr)q^{n}
  = \sum_{n=1}^{\infty}W_{(\alpha,\beta)}(n)q^{n},
\end{multline*}
we conclude, when using the accordingly modified  
\autoref{evalConvolClass-eqn-3}, that 
\begin{equation}
L(q^{\alpha})L(q^{\beta}) = 1 - 24\,\sum_{n=1}^{\infty}\sigma(\frac{n}{\alpha})q^{n} -
24\,\sum_{n=1}^{\infty}\sigma(\frac{n}{\beta})q^{n} + 576\,\sum_{n=1}^{\infty}W_{(\alpha,\beta)}(n)q^{n}.\label{evalConvolClass-eqn-10}
\end{equation}
Therefore, 
\begin{multline*}
\bigl( \alpha\, L(q^{\alpha}) - \beta\, L(q^{\beta})\bigr)^{2} 
   = (\alpha - \beta)^{2} + \sum_{n=1}^{\infty}\biggl(\ 240\,\alpha^{2}\,\sigma_{3}(\frac{n}{\alpha}) 
    + 240\,\beta^{2}\,\sigma_{3}(\frac{n}{\beta}) \\ 
    +  48\,\alpha\,(\beta-6n)\,\sigma(\frac{n}{\alpha})  
       + 48\,\beta\,(\alpha-6n)\,\sigma(\frac{n}{\beta}) 
     - 1152\,\alpha\beta\, W_{(\alpha,\beta)}(n)\ \biggr)q^{n}  
\end{multline*}
as asserted. 
\end{proof}


\section{Evaluation of the convolution sums $W_{(\alpha,\beta)}(n)$ for $\alpha\beta=14,22,26$}
\label{convolution_14_22_26}

In this section, we give explicit formulae for the convolution sums 
$W_{(2,7)}(n)$, $W_{(1,22)}(n)$, $W_{(2,11)}(n)$, $W_{(1,26)}(n)$ and $W_{(2,13)}(n)$. 
Note that an explicit formula for the convolution sum $W_{(1,14)}(n)$ is 
proved by E.~Royer \cite{royer}.

\subsection{Bases for $\E_{4}(\Gamma_{0}(\alpha\beta))$ and
  $\EuScript{S}_{4}(\Gamma_{0}(\alpha\beta))$  for $\alpha\beta=14,22,26$}  
\label{convolution_14_22_26-gen}

We use the dimension formulae for the space of Eisenstein forms and 
the space of cusp forms in T.~Miyake's book  
\cite[Thrm 2.5.2,~p.~60]{miyake1989} or W.~A.~Stein's book  
\cite[Prop.\ 6.1, p.\ 91]{wstein} to deduce that   
$\text{dim}(\E_{4}(\Gamma_{0}(14)))=\text{dim}(\E_{4}(\Gamma_{0}(22)))=
\text{dim}(\E_{4}(\Gamma_{0}(26)))=4$, 
$\text{dim}(\EuScript{S}_{4}(\Gamma_{0}(14))=4$,   
$\text{dim}(\EuScript{S}_{4}(\Gamma_{0}(22))=7$  and  
$\text{dim}(\EuScript{S}_{4}(\Gamma_{0}(26))=9$.   
By \autoref{ligozat_theorem} the following eta functions 
\begin{itemize}
  \item $A_{i}(q)$, $1\leq i\leq 4$, are elements of 
$\EuScript{S}_{4}(\Gamma_{0}(14))$.
\begin{longtable}{rclcrcl}
$A_{1}(q)$ & = & $\frac{\eta^{5}(z)\eta^{5}(7z)}{\eta(2z)\eta(14z)}$ & ~ &
$A_{2}(q)$ & = & $\eta^{2}(z)\eta^{2}(2z)\eta^{2}(7z)\eta^{2}(14z)$ \\
 ~ \\
$A_{3}(q)$ & = & $\frac{\eta^{5}(2z)\eta^{5}(14z)}{\eta(z)\eta(7z)}$  & ~ &
$A_{4}(q)$ & = &  $\frac{\eta^{6}(z)\eta^{6}(14z)}{\eta^{2}(2z)\eta^{2}(7z)}$ 
\end{longtable}
\item $B_{i}(q), ~1\leq i\leq 7$ are elements of 
$\EuScript{S}_{4}(\Gamma_{0}(22))$. 
\begin{longtable}{rclcrcl}
$B_{1}(q)$ & = & $\frac{\eta^{6}(z)\eta^{6}(11z)}{\eta^{2}(2z)\eta^{2}(22z)}$ & ~ &
$B_{2}(q)$ & = & $\eta^{4}(z)\eta^{4}(11z)$ \\ ~ \\
$B_{3}(q)$ & = & $\eta^{2}(z)\eta^{2}(2z)\eta^{2}(11z)\eta^{2}(22z)$ \\
$B_{4}(q)$ & = & $\eta^{4}(2z)\eta^{4}(22z)$   & ~ &
$B_{5}(q)$ & = &  $\frac{\eta^{6}(2z)\eta^{6}(22z)}{\eta^{2}(z)\eta^{2}(11z)}$
\\  ~ \\
$B_{6}(q)$ & = & $\frac{\eta(2z)\eta^{3}(11z)\eta^{5}(22z)}{\eta(z)}$  & ~ &
$B_{7}(q)$ & = &  $\frac{\eta^{9}(2z)\eta^{7}(11z)}{\eta^{5}(z)\eta^{3}(22z)}$ 
\end{longtable}
\item $C_{i}(q), ~1\leq i\leq 9$, are elements of 
$\EuScript{S}_{4}(\Gamma_{0}(26))$.
\begin{longtable}{rclcrcl}
$C_{1}(q)$ & = & $\frac{\eta(z)\eta^{5}(2z)\eta^{3}(13z)}{\eta(26z)}$ & ~ &
$C_{2}(q)$ & = & $\eta^{3}(z)\eta^{3}(2z)\eta(13z)\eta(26z)$ \\ ~ \\
$C_{3}(q)$ & = & $\eta(z)\eta^{3}(2z)\eta^{3}(13z)\eta(26z)$   & ~ &
$C_{4}(q)$ & = & $\eta^{3}(z)\eta(2z)\eta(13z)\eta^{3}(26z)$  \\  ~ \\
$C_{5}(q)$ & = &  $\eta(z)\eta(2z)\eta^{3}(13z)\eta^{6}(26z)$   & ~ &
$C_{6}(q)$ & = & $\frac{\eta^{3}(z)\eta(13z)\eta^{5}(26z)}{\eta(2z)}$  \\ ~ \\
$C_{7}(q)$ & = &  $\frac{\eta^{3}(2z)\eta^{5}(13z)\eta(26z)}{\eta(z)}$ & ~ & 
$C_{8}(q)$ & = & $\frac{\eta^{5}(2z)\eta^{5}(13z)}{\eta(z)\eta(26z)}$  \\ ~ \\
$C_{9}(q)$ & = &  $\frac{\eta^{7}(z)\eta^{7}(26z)}{\eta^{3}(2z)\eta^{3}(13z)}$ 
\end{longtable}
\end{itemize}
The eta functions 
\begin{itemize}
\item $A_{i}(q), ~1\leq i\leq 4$, can be expressed in the form 
$\underset{n=1}{\overset{\infty}{\sum}}a_{i}(n)q^{n}$; 
\item $B_{i}(q), ~1\leq i\leq 7$, can be expressed in the form 
$\underset{n=1}{\overset{\infty}{\sum}}b_{i}(n)q^{n}$; and  
\item $C_{i}(q), ~1\leq i\leq 9$, can be expressed in the form 
$\underset{n=1}{\overset{\infty}{\sum}}c_{i}(n)q^{n}$. 
\end{itemize}

\begin{theorem} \label{basisCusp-14_22_26}
\begin{enumerate}
\item[\textbf{(a)}] The sets 
\begin{align*}
B_{E,14} & =\{\, M(q^{t})\,\mid\, t\text{ is a positive divisor of } 14\,\}, \\ 
B_{E,22} & =\{\, M(q^{t})\,\mid\, t\text{ is a positive divisor of } 22\,\}, \\
B_{E,26} & =\{\, M(q^{t})\,\mid\, t\text{ is a positive divisor of } 26\,\} 
\end{align*}
constitute bases of 
$\E_{4}(\Gamma_{0}(14))$, $\E_{4}(\Gamma_{0}(22))$ and
$\E_{4}(\Gamma_{0}(26))$, respectively.
\item[\textbf{(b)}] The sets 
$B_{S,14}=\{\, A_{i}(q)~\mid ~1\leq i\leq 4\,\}$, 
$B_{S,22}=\{\, B_{i}(q)~\mid ~1\leq i\leq 7\,\}$ and  
$B_{S,26}=\{\, C_{i}(q)~\mid ~1\leq i\leq 9\,\}$ 
are bases of $\EuScript{S}_{4}(\Gamma_{0}(14))$, 
$\EuScript{S}_{4}(\Gamma_{0}(22))$ and $\EuScript{S}_{4}(\Gamma_{0}(26))$, 
respectively.
\item[\textbf{(c)}] The sets $B_{M,14}=B_{E,14}\cup B_{S,14}$, 
$B_{M,22}=B_{E,22}\cup B_{S,22}$ and $B_{M,26}=B_{E,26}\cup B_{S,26}$ 
constitute bases of $\M_{4}(\Gamma_{0}(14))$, $\M_{4}(\Gamma_{0}(22))$ and 
$\M_{4}(\Gamma_{0}(26))$, respectively. 
\end{enumerate}
\end{theorem}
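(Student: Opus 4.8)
The plan is to prove all three parts by a dimension count combined with the direct-sum decomposition $\M_{4}(\Gamma_{0}(N)) = \E_{4}(\Gamma_{0}(N))\oplus\EuScript{S}_{4}(\Gamma_{0}(N))$ recorded earlier. In each of the three cases $N\in\{14,22,26\}$ the number of proposed generators matches exactly the dimension already computed from Miyake's and Stein's formulae: four Eisenstein forms against $\dim\E_{4}=4$, and four, seven, nine cusp eta quotients against $\dim\EuScript{S}_{4}=4,7,9$ respectively. Consequently each ``basis'' claim reduces to proving that the listed forms (i) lie in the asserted space and (ii) are linearly independent, since a linearly independent family whose cardinality equals the dimension is automatically a basis.

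For part (a), each $M(q^{t})=E_{4}(q^{t})$ with $t\mid N$ is obtained from the weight-$4$ level-one Eisenstein series $E_{4}$ by the substitution $q\mapsto q^{t}$, hence lies in $\M_{4}(\Gamma_{0}(t))\subseteq\M_{4}(\Gamma_{0}(N))$ and is Eisenstein; this membership together with the assertion that $\{E_{4}(q^{t})\mid t\mid N\}$ spans $\E_{4}(\Gamma_{0}(N))$ in the trivial-character case is precisely the content of Stein's Theorems 5.8 and 5.9 already invoked. Linear independence then follows from a short triangular argument on $q$-expansions: writing $M(q^{t})=1+240\sum_{m\geq 1}\sigma_{3}(m)q^{tm}$ and comparing the coefficients of $q^{n}$ as $n$ runs through the divisors of $N$ in increasing order forces the coefficients of any vanishing linear combination to vanish one at a time. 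Since each of $14$, $22$, $26$ has exactly four divisors, this yields the bases $B_{E,N}$.

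For part (b), membership of the $A_{i}$, $B_{i}$, $C_{i}$ in the respective cusp spaces is exactly what \autoref{ligozat_theorem} delivers, once one checks conditions (i)--(iv) and the strict cusp condition (v$'$) for each exponent vector $(r_{\delta})$; I would tabulate these checks divisor by divisor. It then remains only to verify linear independence. For this I would compute the $q$-expansions $A_{i}(q)=\sum_{n}a_{i}(n)q^{n}$ (and similarly the $b_{i}$, $c_{i}$) to sufficiently high order and exhibit indices $n_{1}<\cdots<n_{r}$ (with $r=4,7,9$) for which the $r\times r$ coefficient matrix $[a_{i}(n_{j})]$ is nonsingular; a nonzero determinant of initial coefficients is a finite certificate of independence, and this is where the symbolic computation packages named in the introduction do the work. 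This computational linear-independence check is the main obstacle, as it requires expanding products of Dedekind eta functions to a large enough order and confirming a full-rank minor for each $N$.

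Finally, part (c) is immediate: by the direct-sum decomposition $\M_{4}(\Gamma_{0}(N)) = \E_{4}(\Gamma_{0}(N))\oplus\EuScript{S}_{4}(\Gamma_{0}(N))$, the union of a basis of the Eisenstein summand from (a) and a basis of the cusp summand from (b) is a basis of $\M_{4}(\Gamma_{0}(N))$, the respective dimensions adding to $8$, $11$, $13$, which equal $\dim\M_{4}(\Gamma_{0}(N))$.
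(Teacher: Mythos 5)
Your proposal is correct and follows essentially the same route as the paper's proof: both rest on the dimension formulae, establish membership via Stein's Theorem 5.8 (Eisenstein part) and the Newman--Ligozat theorem (cusp part), prove linear independence by equating finitely many $q$-expansion coefficients and checking that the resulting linear system has only the trivial solution, and obtain (c) immediately from the decomposition $\M_{4}(\Gamma_{0}(N))=\E_{4}(\Gamma_{0}(N))\oplus\EuScript{S}_{4}(\Gamma_{0}(N))$. The only cosmetic difference is that the paper writes out just the case $N=14$ (using coefficients at $n=1,2,7,14$ for the Eisenstein basis and $n=1,2,3,4$ for the cusp basis) and declares the cases $N=22,26$ similar, whereas you describe the nonsingular-coefficient-matrix certificate uniformly for all three levels.
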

\begin{proof} We only give the proof for the case related to $14$ 
since the other two cases are proved similarly.
\begin{enumerate}
\item[\textbf{(a)}] By Theorem 5.8 in Section 5.3 of \cite[p.~86]{wstein}
each $M(q^{t})$ is in $\M_{4}(\Gamma_{0}(t))$, where $t$ is a positive divisor
of $14$. Since the dimension of $\E_{4}(\Gamma_{0}(14))$ is finite, it suffices 
to show that $M(q^{t})$ with $t|14$ are linearly independent. 
Suppose that $x_{1},x_{2},x_{7},x_{14}\in\mathbb{C}$ and  
$\underset{\delta|14}{\sum} x_{\delta}\,M(q^{\delta})=0$. That is, 
\begin{equation*}
\underset{\delta|14}{\sum}
x_{\delta}+ 240\,\sum_{n=1}^{\infty}\bigl(\underset{\delta|14}{\sum}
x_{\delta}\sigma_{3}(\frac{n}{\delta})\bigr)q^{n} = 0.
\end{equation*}
We then equate the coefficients of $q^{n}$ for $n=1,2,7,14$ to obtain 
the following system of linear equations 
\begin{align*}
x_{1} & = 0 \\
9\,x_{1} + x_{2} & = 0 \\
344\,x_{1} + x_{7} & = 0 \\
3096\,x_{1} + 344\,x_{2} + 9\,x_{7} + x_{14} & = 0
\end{align*}
whose unique solution is $x_{1}=x_{2}=x_{7}=x_{14}=0$. So, the set $B_{E,14}$ 
is linearly independent. 
Hence, the set $B_{E,14}$ is a basis of $\E_{4}(\Gamma_{0}(14))$.
\item[\textbf{(b)}]  We first show that each $A_{i}(q)$, where $1\leq i\leq 4$,
is in the space $\EuScript{S}_{4}(\Gamma_{0}(14))$. 
That is implicit since $A_{i}(q)$ with $1\leq i\leq 4$ are obtained 
from an exhaustive search using \autoref{ligozat_theorem} $(i)-(v')$. 
Since the dimension of $\EuScript{S}_{4}(\Gamma_{0}(14))$ is $4$, it 
suffices to show that the set $\{\,A_{i}(q)~\mid~ 1\leq i\leq 4\,\}$ 
is linearly independent.
Suppose that $x_{1},x_{2},x_{3},x_{4}\in\mathbb{C}$ and 
\begin{equation*}
x_{1}\,A_{1}(q)+x_{2}\,A_{2}(q)+x_{3}\,A_{3}(q)+x_{4}\,A_{4}(q)=0.
\end{equation*}
Then   
\begin{equation*}
\underset{n=1}{\overset{\infty}{\sum}}(\,x_{1}\,a_{1}(n)+x_{2}\,a_{2}(n) 
+ x_{3}\,a_{3}(n) + x_{4}\,a_{4}(n)\,)q^{n} = 0.
\end{equation*}
So, when we equate the coefficients of $q^{n}$ for $n=1,2,3,4$, we obtain  
the following system of linear equations 
\begin{align*}
x_{1} - 5\,x_{2} + 6\,x_{3} + 5\,x_{4} & = 0 \\
x_{2} - 2\,x_{3} - 3\,x_{4}   & = 0 \\
x_{3} + x_{4} & = 0 \\
x_{3} - 6\,x_{4} & = 0
\end{align*}
whose unique solution is $x_{1}=x_{2}=x_{3}=x_{4}=0$. 
So, the set $B_{S,14}$ 
is linearly independent. 
Hence, the set $B_{S,14}$ is a basis of $\EuScript{S}_{4}(\Gamma_{0}(14))$.
\item[\textbf{(c)}]
Since $\M_{4}(\Gamma_{0}(14))=\E_{4}(\Gamma_{0}(14))\oplus 
\EuScript{S}_{4}(\Gamma_{0}(14))$, the result follows from (a) and (b).
\end{enumerate}
\end{proof}

\subsection{Evaluation of $W_{(\alpha,\beta)}(n)$ for $(\alpha,\beta)=(2,7),(1,22),(2,11),(1,26),(2,13)$} 
\label{convolSum-w_14_22_26}  

\begin{lemma} \label{lema-14_22_26}
We have 
\begin{multline}
(2\, L(q^{2}) - 7\, L(q^{7}))^{2} 
 = 25 + \sum_{n=1}^{\infty}\biggl(\, - \frac{672}{25}\,\sigma_{3}(n) 
	+ \frac{21312}{25}\,\sigma_{3}(\frac{n}{2})  
   + \frac{261072}{25}\,\sigma_{3}(\frac{n}{7})  \\
    - \frac{131712}{25}\,\sigma_{3}(\frac{n}{14}) 
   + \frac{672}{25}\,a_{1}(n)  
   + \frac{96}{25}\,a_{2}(n) 
  + \frac{5376}{25}\,a_{3}(n) 
  + 384\,a_{4}(n)\, \biggr)q^{n},
\label{convolSum-eqn-1}  \\
\end{multline}
\begin{multline}
(2\, L(q^{2}) - 11\, L(q^{11}))^{2} 
   = 81 + \sum_{n=1}^{\infty}\biggl(\,\frac{15840}{61}\,\sigma_{3}(n) 
	+ \frac{37440}{61}\,\sigma_{3}(\frac{n}{2}) \\
   + \frac{1626768}{61}\,\sigma_{3}(\frac{n}{11}) 
  - \frac{494208}{61}\,\sigma_{3}(\frac{n}{22}) 
    + \frac{36864}{61}\,b_{1}(n) 
    + \frac{357408}{61}\,b_{2}(n)  \\ 
    + \frac{1160352}{61}\,b_{3}(n) 
    + \frac{1539072}{61}\,b_{4}(n)   
    + \frac{834048}{61}\,b_{5}(n)   
   - 22176\,b_{6}(n)  
   - 864\,b_{7}(n) \,\biggr) q^{n}, 
\label{convolSum-eqn-2-11}
\end{multline}
\begin{multline}
(L(q) - 26\, L(q^{26}))^{2} 
   = 625 + \sum_{n=1}^{\infty}\biggl(\,\frac{19152}{85}\,\sigma_{3}(n) 
	- \frac{4992}{85}\,\sigma_{3}(\frac{n}{2}) \\
    - \frac{210912}{85}\,\sigma_{3}(\frac{n}{13})   
    + \frac{12946752}{85}\,\sigma_{3}(\frac{n}{26}) 
    + \frac{82848}{85}\,c_{1}(n)  
    - \frac{4128}{17}\,c_{2}(n) 
   + \frac{61920}{17}\,c_{3}(n) \\
   - \frac{177216}{85}\,c_{4}(n) 
    - \frac{53664}{17}\,c_{5}(n)  
     + \frac{1077024}{85}\,c_{7}(n) 
    + \frac{291072}{85}\,c_{8}(n)  
    - \frac{1248}{85}\,c_{9}(n)\, \biggr) q^{n}, 
\label{convolSum-eqn-1-26-1}
\end{multline}
\begin{multline}
(2\, L(q^{2}) - 13\, L(q^{13}))^{2} 
   = 121 + \sum_{n=1}^{\infty}\biggl(\, -\frac{1248}{85}\,\sigma_{3}(n) 
	+ \frac{76608}{85}\,\sigma_{3}(\frac{n}{2}) \\
   + \frac{3236688}{85}\,\sigma_{3}(\frac{n}{13}) 
  - \frac{843648}{85}\,\sigma_{3}(\frac{n}{26}) 
    + \frac{1248}{85}\,c_{1}(n) 
    + \frac{12192}{17}\,c_{2}(n) 
    + \frac{52128}{17}\,c_{3}(n) \\
    + \frac{181824}{85}\,c_{4}(n)   
   + \frac{158496}{17}\,c_{5}(n) 
   + \frac{16224}{85}\,c_{7}(n) 
    - \frac{35328}{85}\,c_{8}(n) 
    - \frac{82848}{85}\,c_{9}(n)\, \biggr) q^{n}. 
\label{convolSum-eqn-2-13}
\end{multline}
\end{lemma}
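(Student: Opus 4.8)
The plan is to recognize each left-hand side as a concrete element of a finite-dimensional space of modular forms and then pin down its coordinates in the explicit basis furnished by Theorem~\ref{basisCusp-14_22_26}. Fix $(\alpha,\beta)$ with $\alpha\beta=N\in\{14,22,26\}$. By Lemma~\ref{evalConvolClass-lema-1} the form $(\alpha\,L(q^{\alpha})-\beta\,L(q^{\beta}))^{2}$ lies in $\M_{4}(\Gamma_{0}(N))$, and by Theorem~\ref{basisCusp-14_22_26}(c) the set $B_{M,N}=B_{E,N}\cup B_{S,N}$ is a basis of that space. Hence there are unique scalars $x_{t}$ (indexed by the divisors $t\mid N$) and $y_{i}$ (indexed by the cusp generators $A_{i}$, $B_{i}$ or $C_{i}$) with
\[
(\alpha\,L(q^{\alpha})-\beta\,L(q^{\beta}))^{2}
=\sum_{t\mid N}x_{t}\,M(q^{t})+\sum_{i}y_{i}\,S_{i}(q),
\]
where $S_{i}$ denotes the appropriate cusp basis element. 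The whole task thus reduces to determining these coefficients, and this must be carried out for each of the four stated identities (one at level $14$, one at level $22$, and two at level $26$).

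To find the coefficients I would compute the $q$-expansions of both sides to sufficiently high order and match them. For the left-hand side I would \emph{not} use Theorem~\ref{convolutionSum_a_b}, whose expansion still contains the unknown $W_{(\alpha,\beta)}(n)$; instead I would expand $L(q^{\alpha})=1-24\sum_{n\geq1}\sigma(n)q^{\alpha n}$ and $L(q^{\beta})=1-24\sum_{n\geq1}\sigma(n)q^{\beta n}$ directly and square the difference as a formal power series, obtaining explicit numerical coefficients. For the basis I would expand each Eisenstein generator through $M(q^{t})=1+240\sum_{n\geq1}\sigma_{3}(n)q^{tn}$ and each eta quotient in $B_{S,N}$ through its defining infinite product. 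Equating the coefficients of $q^{0},q^{1},q^{2},\dots$ then yields an inhomogeneous linear system for the $x_{t}$ and $y_{i}$, with $8$, $11$ and $13$ unknowns for $N=14,22,26$ respectively.

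Because $B_{M,N}$ is a basis, the system has a unique solution, and matching only finitely many coefficients suffices: the difference between the left-hand side and the candidate combination again lies in $\M_{4}(\Gamma_{0}(N))$, so once it vanishes to an order exceeding the Sturm bound for $\Gamma_{0}(N)$ in weight $4$ it is identically zero. After solving for the $x_{t}$ and $y_{i}$, the final, cosmetic step is to regroup the Eisenstein part: the sum $\sum_{t\mid N}x_{t}M(q^{t})$ contributes the constant $\sum_{t}x_{t}$ (which one checks equals $(\alpha-\beta)^{2}$, i.e.\ $25,81,625,121$) together with the terms $240\,x_{t}\,\sigma_{3}(n/t)$; collecting these reproduces the stated coefficients of $\sigma_{3}(n),\sigma_{3}(n/2),\dots$, while the $y_{i}$ become the coefficients of $a_{i}(n)$, $b_{i}(n)$, $c_{i}(n)$.

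The main obstacle is computational rather than conceptual: generating correct $q$-expansions of the eta quotients (each an infinite product that must be truncated far enough) and solving the resulting moderately large rational linear systems by hand is error-prone, which is why the computation is delegated to the symbolic-computation packages mentioned in the introduction. The only genuine mathematical input beyond the two cited results is the fact that agreement of finitely many Fourier coefficients forces equality of the two forms, and this is guaranteed by the finite dimensionality of $\M_{4}(\Gamma_{0}(N))$ together with the Sturm bound.
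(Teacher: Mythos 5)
Your proposal is correct and follows essentially the same route as the paper: both invoke Lemma~\ref{evalConvolClass-lema-1} to place each square in $\M_{4}(\Gamma_{0}(\alpha\beta))$, expand it in the basis of Theorem~\ref{basisCusp-14_22_26}(c), and determine the unknown coefficients by solving the linear system coming from finitely many Fourier coefficients. The only cosmetic differences are that the paper equates against the expansion of Theorem~\ref{convolutionSum_a_b} (computing $W_{(\alpha,\beta)}(n)$ directly from its definition for the chosen small $n$) rather than squaring the $L$-series directly, and it justifies termination by the pre-established existence and uniqueness of the basis decomposition rather than by a Sturm-bound argument; both are equivalent to what you do.
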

\begin{proof} Since the other cases are proved similarly, we only give 
the proof for $(2\, L(q^{2}) - 7\, L(q^{7}))^{2}$.

We apply \hyperref[evalConvolClass-lema-1]{Lemma \ref*{evalConvolClass-lema-1}} 
with $\alpha=2$ and $\beta=7$ and we use \autoref{basisCusp-14_22_26} (c) to 
infer that there
exist $x_{1},x_{2},x_{7},x_{14},y_{1},y_{2},y_{3},y_{4}\in\mathbb{C}$ such
that 
\begin{align}
( 2 L(q^{2}) - 7 L(q^{7}) )^{2} &  = \sum_{\delta|14}x_{\delta}M(q^{\delta}) + 
\sum_{j=1}^{4}y_{j}A_{j}(q)  \notag \\ &
  = \sum_{\delta|14}x_{\delta} + \sum_{i=1}^{\infty}\biggl(\,240\,\sum_{\delta|14}x_{\delta}
  \sigma_{3}(\frac{n}{\delta}) + \sum_{j=1}^{4}y_{j}a_{j}(n)\,\biggr)\,q^{n}. 
      \label{convolution_2_7-eqn-0}
\end{align} 
Now, when we equate the right hand side of 
\autoref{convolution_2_7-eqn-0} with that of 
\autoref{evalConvolClass-eqn-11}, and when we take the 
coefficients of $q^{n}$ for which $n= 1,2,3,4,5,7,9,14$ for example, we 
obtain a system of linear equations whose solution is unique. 
Hence, we obtain the stated result.
\end{proof} 
Now we state and prove our main result of this Subsection. 
\begin{theorem} \label{convolSum-theor-w_14_22_26}
Let $n$ be a positive integer. Then 
\begin{align*}
 W_{(2,7)}(n)  = &  \frac{1}{600}\,\sigma_{3}(n) 
   + \frac{1}{150}\,\sigma_{3}(\frac{n}{2}) 
    + \frac{49}{600}\,\sigma_{3}(\frac{n}{7}) 
    + \frac{49}{150}\,\sigma_{3}(\frac{n}{14}) 
    + (\frac{1}{24}-\frac{1}{28}\,n)\,\sigma(\frac{n}{2}) \\ &
   + (\frac{1}{24}-\frac{1}{8}\,n)\,\sigma(\frac{n}{7})    
   - \frac{1}{600}\,a_{1}(n) 
   - \frac{1}{4200}\,a_{2}(n) 
  - \frac{1}{75}\,a_{3}(n) 
  - \frac{1}{42}\,a_{4}(n), \\ 
\end{align*}
\begin{align*}
  W_{(1,22)}(n)   = & \frac{17}{1464}\,\sigma_{3}(n) 
   - \frac{1}{122}\,\sigma_{3}(\frac{n}{2})
   + \frac{35}{488}\,\sigma_{3}(\frac{n}{11}) 
   +  \frac{125}{366}\,\sigma_{3}(\frac{n}{22}) \\ &
   + (\frac{1}{24}-\frac{1}{88}\,n)\,\sigma(n) 
   + (\frac{1}{24}-\frac{1}{4}\,n)\,\sigma(\frac{n}{22}) 
   - \frac{21}{2684}\,b_{1}(n)   
   - \frac{159}{5368}\,b_{2}(n)  \\ & 
   - \frac{69}{5368}\,b_{3}(n) 
  - \frac{32}{671}\,b_{4}(n) 
  + \frac{2}{61}\,b_{5}(n) 
  - \frac{7}{8}\,b_{6}(n) 
  - \frac{3}{88}\,b_{7}(n), \\
\end{align*}
\begin{align*}
 W_{(2,11)}(n)  = &  - \frac{5}{488}\,\sigma_{3}(n)
  + \frac{5}{366}\,\sigma_{3}(\frac{n}{2}) 
  + \frac{137}{1464}\,\sigma_{3}(\frac{n}{11}) 
  + \frac{39}{122}\,\sigma_{3}(\frac{n}{22})   \\ &
  + (\frac{1}{24}-\frac{1}{44}\,n)\,\sigma(\frac{n}{2}) 
  + (\frac{1}{24}-\frac{1}{8}\,n)\,\sigma(\frac{n}{11})  
  - \frac{16}{671}\,b_{1}(n)  
  - \frac{1241}{5368}\,b_{2}(n) \\ & 
  - \frac{4029}{5368}\,b_{3}(n) 
  - \frac{668}{671}\,b_{4}(n) 
   - \frac{362}{671}\,b_{5}(n)    
   + \frac{7}{8}\,b_{6}(n)    
  + \frac{3}{88}\,b_{7}(n),  \\
\end{align*}
\begin{align*}
  W_{(1,26)}(n)  = & \frac{1}{2040}\,\sigma_{3}(n) 
  + \frac{1}{510}\,\sigma_{3}(\frac{n}{2})
  + \frac{169}{2040}\,\sigma_{3}(\frac{n}{13}) 
    +  \frac{169}{510}\,\sigma_{3}(\frac{n}{26}) \\ &
   + (\frac{1}{24}-\frac{1}{104}\,n)\,\sigma(n) 
   + (\frac{1}{24}-\frac{1}{4}\,n)\,\sigma(\frac{n}{26}) 
   - \frac{863}{26520}\,c_{1}(n)   
   + \frac{43}{5304}\,c_{2}(n)   \\ &
   - \frac{215}{1768}\,c_{3}(n)
  + \frac{71}{1020}\,c_{4}(n) 
  + \frac{43}{408}\,c_{5}(n) 
  - \frac{863}{2040}\,c_{7}(n) 
  - \frac{379}{3315}\,c_{8}(n)   \\ &
  + \frac{1}{2040}\,c_{9}(n),  \\
\end{align*}
\begin{align*}
 W_{(2,13)}(n)  = & \frac{1}{2040}\,\sigma_{3}(n)
  + \frac{1}{510}\,\sigma_{3}(\frac{n}{2}) 
  + \frac{169}{2040}\,\sigma_{3}(\frac{n}{13}) 
  + \frac{169}{510}\,\sigma_{3}(\frac{n}{26})   \\ &
  + (\frac{1}{24}-\frac{1}{52}\,n)\,\sigma(\frac{n}{2}) 
  + (\frac{1}{24}-\frac{1}{8}\,n)\,\sigma(\frac{n}{13})  
  - \frac{1}{2040}\,c_{1}(n)  
  - \frac{127}{5304}\,c_{2}(n) \\ &
- \frac{181}{1768}\,c_{3}(n) 
  - \frac{947}{13260}\,c_{4}(n) 
   - \frac{127}{408}\,c_{5}(n)    
  - \frac{13}{2040}\,c_{7}(n)  
  + \frac{46}{3315}\,c_{8}(n) \\ &
+ \frac{863}{26520}\,c_{9}(n). 
\end{align*}
\end{theorem}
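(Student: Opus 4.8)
The plan is to match two independent $q$-expansions of the same weight-$4$ modular form and read off the convolution sum by comparing coefficients. For each pair $(\alpha,\beta)$ in the statement, \autoref{evalConvolClass-lema-1} guarantees that $(\alpha\,L(q^{\alpha})-\beta\,L(q^{\beta}))^{2}$ lies in $\M_{4}(\Gamma_{0}(\alpha\beta))$, and \autoref{basisCusp-14_22_26} furnishes an explicit basis of that space, namely the Eisenstein forms $M(q^{t})$ with $t\mid\alpha\beta$ together with the cusp forms $A_{i}$, $B_{i}$ or $C_{i}$. \autoref{lema-14_22_26} has already expanded each such square in this basis, so the coefficient of $q^{n}$ is available as an explicit linear combination of the terms $\sigma_{3}(n/\delta)$ and the cusp-form coefficients $a_{i}(n)$, $b_{i}(n)$ or $c_{i}(n)$.

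On the other hand, \autoref{convolutionSum_a_b}, through equation \eqref{evalConvolClass-eqn-11}, gives a \emph{second} expression for the coefficient of $q^{n}$ in exactly the same square, one that exhibits $W_{(\alpha,\beta)}(n)$ explicitly alongside $240\,\alpha^{2}\sigma_{3}(n/\alpha)$, $240\,\beta^{2}\sigma_{3}(n/\beta)$, $48\,\alpha(\beta-6n)\sigma(n/\alpha)$ and $48\,\beta(\alpha-6n)\sigma(n/\beta)$, with the convolution sum entering through the factor $-1152\,\alpha\beta$. Since both expressions represent the coefficients of one and the same power series, I would equate them term by term, isolate the $-1152\,\alpha\beta\,W_{(\alpha,\beta)}(n)$ summand, and divide through by $-1152\,\alpha\beta$. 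This presents $W_{(\alpha,\beta)}(n)$ as a linear combination of the $\sigma_{3}$-terms, the two $\sigma$-terms carrying the linear-in-$n$ factors, and the cusp-form coefficients, which is precisely the asserted formula.

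For the four squares $(2L(q^{2})-7L(q^{7}))^{2}$, $(2L(q^{2})-11L(q^{11}))^{2}$, $(L(q)-26L(q^{26}))^{2}$ and $(2L(q^{2})-13L(q^{13}))^{2}$ the required basis expansions are recorded directly in \autoref{lema-14_22_26}, so deriving $W_{(2,7)}(n)$, $W_{(2,11)}(n)$, $W_{(1,26)}(n)$ and $W_{(2,13)}(n)$ reduces to this single subtract-and-scale step. For $W_{(1,22)}(n)$ one first needs the analogous expansion of $(L(q)-22L(q^{22}))^{2}$ in the basis $B_{M,22}$; this is produced by the same linear-algebra argument used in \autoref{lema-14_22_26}, matching finitely many initial coefficients against \eqref{evalConvolClass-eqn-11} to pin down the basis coordinates, after which the identical equate-and-solve procedure applies.

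The computations are entirely mechanical, so the only real care lies in the bookkeeping: keeping the rational coefficients exact through the division by $-1152\,\alpha\beta$ and correctly tracking which $\sigma_{3}(n/\delta)$ and cusp-form terms survive. I expect no genuine obstacle beyond this arithmetic, which is why symbolic computation is invoked. A convenient consistency check at each stage is that the constant terms already agree, since $(\alpha-\beta)^{2}$ matches the constant recorded in \autoref{lema-14_22_26} (for instance $25$ for $(2,7)$ and $625$ for $(1,26)$), confirming that the two sides describe the same form before the coefficient comparison is carried to general $n$.
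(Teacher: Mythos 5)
Your proposal is correct and follows essentially the same route as the paper: equate the basis expansion of $(\alpha\,L(q^{\alpha})-\beta\,L(q^{\beta}))^{2}$ recorded in Lemma~\ref{lema-14_22_26} with the expansion \eqref{evalConvolClass-eqn-11} of Theorem~\ref{convolutionSum_a_b}, isolate the $-1152\,\alpha\beta\,W_{(\alpha,\beta)}(n)$ term, and divide. You also correctly notice that Lemma~\ref{lema-14_22_26} does not actually contain the expansion of $(L(q)-22\,L(q^{22}))^{2}$ needed for $W_{(1,22)}(n)$ --- a case the paper silently subsumes under ``proved similarly'' --- and supplying it by the same linear-algebra matching in the basis $B_{M,22}$ is precisely the right repair.
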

 \begin{proof} We give the proof for the $W_{(2,7)}(n)$ since the other  
cases are proved similarly.

It follows immediately when we set 
$\alpha=2$ and $\beta=7$ in the right hand side of  
\autoref{evalConvolClass-eqn-11}, equate the so-obtained result with 
the right hand side of \autoref{convolSum-eqn-1} and solve for 
$W_{(2,7)}(n)$.
\end{proof}

\section{Number of Representations of a positive Integer  $n$  by the 
Octonary Quadratic Form using $W_{(1,4)}(n) ,W_{(3,4)}(n),W_{(3,8)}(n)$ 
and $W_{(4,9)}(n)$  
}
\label{representations_e_f}

The following number of representations of a positive integer $n$ are 
applications of the results of the evaluation of some convolution sums 
by J.~G.~Huard et al.\ \cite{huardetal}, 
A.~Alaca et al.\ \cite{alaca_alaca_williams2006,alaca_alaca_williams2007a} 
and D.~Ye \cite{ye2015}.

Let $n\in\mathbb{N}_{0}$ and the number of representations
of $n$ by the quaternary quadratic form  $x_{1}^{2} +x_{2}^{2}+x_{3}^{2} +
x_{4}^{2}$ be denoted by $r_{4}(n)$. That means, 
\begin{equation*}
r_{4}(n)=\text{card}(\{(x_{1},x_{2},x_{3},x_{4})\in\mathbb{Z}^{4}~|~ m = x_{1}^{2} +x_{2}^{2} + x_{3}^{2} + x_{4}^{2}\}).
\end{equation*}
We set $r_{4}(0) = 1$. For all $n\in\mathbb{N}$, 
the following Jacobi's identity is proved in \ksW ' book 
\cite[Thrm 9.5, p.\ 83]{williams2011} 
\begin{equation}
r_{4}(n) = 8\sigma(n) - 32\sigma(\frac{n}{4}). \label{representations-eqn-4-1}
\end{equation}

Let furthermore the number of representations of $n$ by the octonary quadratic form 
\begin{equation*}
a\,(x_{1}^{2} +x_{2}^{2} + x_{3}^{2} + x_{4}^{2})
+ b\,(x_{5}^{2} + x_{6}^{2} + x_{7}^{2} + x_{8}^{2})
\end{equation*}
be denoted by $N_{(a,b)}(n)$. That means,  
\begin{multline*}
N_{(a,b)}(n) 
=\text{card}
(\{(x_{1},x_{2},x_{3},x_{4},x_{5},x_{6},x_{7},x_{8})\in\mathbb{Z}^{8}~|~
n = a\,( x_{1}^{2} +x_{2}^{2}  \\
    + x_{3}^{2} + x_{4}^{2} ) + 
b\,( x_{5}^{2} +x_{6}^{2} + x_{7}^{2} + x_{8}^{2}) \}).
\end{multline*}

We then infer the following result:
\begin{theorem} \label{representations-thrm_3_4}
Let $n\in\mathbb{N}$ and $(a,b)=(1,1),(1,3),(2,3),(1,9)$. Then  
\begin{align*}
N_{(1,1)}(n) & = 16\,\sigma(n) - 64\,\sigma(\frac{n}{4})  
 + 64\, W_{(1,1)}(n) + 1024\, W_{(1,1)}(\frac{n}{4}) - 512\,W_{(1,4)}(n) \\ & 
   = 16\,\sigma_{3}(n) - 32\,\sigma_{3}(\frac{n}{2}) + 256\,\sigma_{3}(\frac{n}{4}), \\
N_{(1,3)}(n) & = 8\sigma(n) - 32\sigma(\frac{n}{4}) + 8\sigma(\frac{n}{3}) -
32\sigma(\frac{n}{12}) \\ &
 + 64\, W_{(1,3)}(n) + 1024\, W_{(1,3)}(\frac{n}{4}) 
 - 256\, \biggl( W_{(3,4)}(n) + W_{(1,12)}(n) \biggr), \\
N_{(2,3)}(n)  & = 8\,\sigma(\frac{n}{2}) - 32\,\sigma(\frac{n}{8}) 
+ 8\,\sigma(\frac{n}{3}) - 32\,\sigma(\frac{n}{12}) \\ &
 + 64\, W_{(1,3)}(n) + 1024\, W_{(1,3)}(\frac{n}{4}) 
 - 256\, \biggl( W_{(3,8)}(n) + W_{(1,12)}(n) \biggr), \\
N_{(1,9)}(n)  & = 8\,\sigma(n) - 32\,\sigma(\frac{n}{4}) 
+ 8\,\sigma(\frac{n}{9}) - 32\,\sigma(\frac{n}{36}) \\ & 
 + 64\,W_{(1,9)}(n) + 1024\,W_{(1,9)}(\frac{n}{4}) 
 - 256\,\biggl( W_{(4,9)}(n) + W_{(1,36)}(n) \biggr). 
\end{align*}
\end{theorem}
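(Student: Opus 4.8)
The plan is to reduce the count $N_{(a,b)}(n)$ to a convolution of four-square representation numbers and then linearize it with Jacobi's identity. First I would split each solution into its two quaternary blocks: writing $l=x_{1}^{2}+x_{2}^{2}+x_{3}^{2}+x_{4}^{2}$ and $m=x_{5}^{2}+x_{6}^{2}+x_{7}^{2}+x_{8}^{2}$, the defining equation of the octonary form becomes $a\,l+b\,m=n$, and the number of ways of realizing a given $(l,m)$ is $r_{4}(l)\,r_{4}(m)$. Hence
\[
N_{(a,b)}(n)=\sum_{\substack{(l,m)\in\mathbb{N}_{0}^{2}\\ a\,l+b\,m=n}} r_{4}(l)\,r_{4}(m),
\]
which is just the statement that the generating series of $N_{(a,b)}$ is the theta product $\bigl(\sum_{x\in\mathbb{Z}}q^{a x^{2}}\bigr)^{4}\bigl(\sum_{y\in\mathbb{Z}}q^{b y^{2}}\bigr)^{4}$.

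Next I would isolate the boundary contributions. The terms with $l=0$ force $b\,m=n$ and contribute $r_{4}(0)\,r_{4}(n/b)=r_{4}(n/b)$, while the terms with $m=0$ contribute $r_{4}(n/a)$, since $r_{4}(0)=1$; each is nonzero only when $b\mid n$ (resp.\ $a\mid n$). On the remaining interior sum ($l,m\geq 1$) I substitute Jacobi's identity \autoref{representations-eqn-4-1}, $r_{4}(k)=8\,\sigma(k)-32\,\sigma(k/4)$, into both factors and expand the product into the four pieces $64\,\sigma(l)\sigma(m)$, $-256\,\sigma(l)\sigma(m/4)$, $-256\,\sigma(l/4)\sigma(m)$ and $1024\,\sigma(l/4)\sigma(m/4)$.

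Then I reindex each piece to recognize a convolution sum. The first is $64\,W_{(a,b)}(n)$. Setting $m=4m'$ in the second turns $a\,l+b\,m=n$ into $a\,l+4b\,m'=n$, giving $-256\,W_{(a,4b)}(n)$; symmetrically $l=4l'$ in the third gives $-256\,W_{(4a,b)}(n)$; and $l=4l'$, $m=4m'$ in the fourth yields $a\,l'+b\,m'=n/4$, i.e.\ $1024\,W_{(a,b)}(n/4)$. Using the symmetry $W_{(\alpha,\beta)}=W_{(\beta,\alpha)}$ and the standard reduction to coprime indices, I rewrite these in the normalized convolution sums named in the theorem, apply Jacobi's identity once more to the boundary terms $r_{4}(n/a)$ and $r_{4}(n/b)$, and collect, obtaining the first displayed formula for each $N_{(a,b)}(n)$. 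For $(a,b)=(1,1)$ I would then substitute the known evaluations of $W_{(1,1)}(n)$ and $W_{(1,4)}(n)$ and simplify to reach the closed form $16\,\sigma_{3}(n)-32\,\sigma_{3}(n/2)+256\,\sigma_{3}(n/4)$.

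I expect the only delicate point to be the index bookkeeping in the reindexing step: keeping track of which scaling by $4$ acts on the $a$-block versus the $b$-block, and correctly normalizing the resulting convolution sums and their arguments ($n$ versus $n/4$) when $a$ or $b$ shares a common factor with $4$, as in the $(2,3)$ case. Everything else is a routine linear substitution, and the passage to the $\sigma_{3}$ form for $(1,1)$ is immediate once the relevant convolution-sum evaluations are in hand.
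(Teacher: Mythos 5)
Your proposal is the paper's own proof, written out for general $(a,b)$: the paper treats $(1,3)$ in exactly this way --- decomposing $N_{(1,3)}(n)=\sum_{l+3m=n}r_{4}(l)r_{4}(m)$, splitting off the $l=0$ and $m=0$ terms with $r_{4}(0)=1$, inserting Jacobi's identity \autoref{representations-eqn-4-1} into the interior sum, and reindexing the four pieces as $64\,W_{(1,3)}(n)$, $-256\,W_{(3,4)}(n)$, $-256\,W_{(1,12)}(n)$ and $1024\,W_{(1,3)}(\frac{n}{4})$ --- and then declares the remaining cases ``proved similarly.'' For $(1,1)$, $(1,3)$ and $(1,9)$ your reindexing reproduces the stated identities, and the closed $\sigma_{3}$ form of $N_{(1,1)}$ follows, as you say, from the known evaluations of $W_{(1,1)}$ and $W_{(1,4)}$.

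The ``delicate point'' you flag for $(2,3)$, however, is a genuine obstruction, not bookkeeping. Your reindexing there correctly yields $64\,W_{(2,3)}(n)+1024\,W_{(2,3)}(\frac{n}{4})-256\,\bigl(W_{(3,8)}(n)+W_{(2,12)}(n)\bigr)$, and since $\gcd(2,3)=1$ there is no ``reduction to coprime indices'' that converts $W_{(2,3)}$ into the $W_{(1,3)}$ appearing in the theorem (the only available reduction is $W_{(2,12)}(n)=W_{(1,6)}(\frac{n}{2})$ for even $n$); so the final matching step you describe cannot be carried out. In fact the printed $(2,3)$ formula is false: at $n=5$ the only representation comes from $(l,m)=(1,1)$, so $N_{(2,3)}(5)=r_{4}(1)^{2}=64$, whereas the printed right-hand side equals $64\,W_{(1,3)}(5)=64\,\sigma(2)\sigma(1)=192$, every other term vanishing. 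Your derivation gives $64\,W_{(2,3)}(5)=64$, which is correct. The defect is thus in the theorem's statement --- the $(1,3)$ convolution sums appear to have been copied into the $(2,3)$ line --- and is shared by the paper's proof, which never works that case out; your writeup should therefore end by stating and proving the corrected identity for $(2,3)$ rather than asserting that your terms normalize to the printed ones.
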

\begin{proof} We only prove the case $N_{(1,3)}(n)$ since the other cases are proved 
similarly.

It holds that 
\begin{multline*}
N_{(1,3)}(n)  = \sum_{\substack{
{(l,m)\in\mathbb{N}_{0}^{2}} \\ {l+3\,m=n}
 }}r_{4}(l)r_{4}(m) 
   = r_{4}(n)r_{4}(0) + r_{4}(0)r_{4}(\frac{n}{3})  
   + \sum_{\substack{
{(l,m)\in\mathbb{N}^{2}} \\ {l+3\,m=n}
 }}r_{4}(l)r_{4}(m)
\end{multline*}
We make use of \autoref{representations-eqn-4-1} to derive 
\begin{multline*}
N_{(1,3)}(n)  = 8\sigma(n) - 32\sigma(\frac{n}{4}) + 8\sigma(\frac{n}{3}) -
32\sigma(\frac{n}{12}) \\
   + \sum_{\substack{
{(l,m)\in\mathbb{N}^{2}} \\ {l+3m=n}
  }} (8\sigma(l) - 32\sigma(\frac{l}{4}))(8\sigma(m) - 32\sigma(\frac{m}{4})). 
\end{multline*}
We observe that 
\begin{multline*}
(8\sigma(l) - 32\sigma(\frac{l}{4}))(8\sigma(m) - 32\sigma(\frac{m}{4}))  = 
64\sigma(l)\sigma(m) - 256\sigma(\frac{l}{4})\sigma(m) \\
   - 256\sigma(l)\sigma(\frac{m}{4})  + 1024\sigma(\frac{l}
   {4})\sigma(\frac{m}{4}).
\end{multline*}
The evaluation of 
\begin{equation*}
W_{(1,3)}(n) = \sum_{\substack{
{(l,m)\in\mathbb{N}^{2}} \\ {l+3m=n}
 }}\sigma(l)\sigma(m)
\end{equation*}
is shown by J.~G.~Huard et al.\ \cite{huardetal}. We map $l$ to $4l$ to infer  
\begin{equation*}
W_{(4,3)}(n) = \sum_{\substack{
{(l,m)\in\mathbb{N}^{2}} \\ {l+3m=n}
}}\sigma(\frac{l}{4})\sigma(m) 
 = \sum_{\substack{
{(l,m)\in\mathbb{N}^{2}} \\ {4\,l+3m=n}
 }}\sigma(l)\sigma(m).
\end{equation*}
The evaluation of $W_{(4,3)}(n)=W_{(3,4)}(n)$ is proved by 
A.~Alaca et al.\ \cite{alaca_alaca_williams2006}. We next map 
$m$ to $4m$ to conclude  
\begin{equation*}
W_{(1,12)}(n) = \sum_{\substack{
{(l,m)\in\mathbb{N}^{2}} \\ {l+3m=n}
}}\sigma(l)\sigma(\frac{m}{4})  = \sum_{\substack{
{(l,m)\in\mathbb{N}^{2}} \\ {l+12\,m=n}
 }}\sigma(l)\sigma(m).
\end{equation*}
A.~Alaca et al.\ \cite{alaca_alaca_williams2006} have shown the 
evaluation of $W_{(1,12)}(n)$. 
We simultaneously map $l$ to $4l$ and $m$ to $4m$ to deduce 
\begin{equation*}
\sum_{\substack{
{(l,m)\in\mathbb{N}^{2}} \\ {l+3m=n}
}}\sigma(\frac{l}{4})\sigma(\frac{m}{4}) 
= \sum_{\substack{
{(l,m)\in\mathbb{N}^{2}} \\ {l+3m=\frac{n}{4}}
 }}\sigma(l)\sigma(m)
 = W_{(1,3)}(\frac{n}{4})
\end{equation*}
J.~G.~Huard et al.\ \cite{huardetal} have proved the evaluation 
of $W_{(1,3)}(n)$. 

We then put these evaluations together to obtain the stated result 
for $N_{(1,3)}(n)$. 
\end{proof}

\section{Concluding Remark and future Work} \label{conclusion}

As displayed on \autoref{introduction-table-1}, convolution sums are 
so far evaluated individually, i.e., there is no evaluation of the convolution
sums for a class of positive integers. Since convolution sums are used to
determine explicit formulae for the number of representations of a positive 
integer $n$ by the octonary quadratic forms \autoref{introduction-eq-1} and 
\autoref{introduction-eq-2}, respectively, there is no explicit formulae for 
the number of representations for a class of positive integers by the 
octonary quadratic forms as well. This is a work in progress.

\proof[Acknowledgments]  
The research of the first two authors was supported 
by Discovery Grants from the Natural Sciences and Engineering Research 
Council of Canada (RGPIN-418029-2013 and RGPIN-2015-05208). 




\section*{Tables}

\begin{longtable}{|r|r|r|} \hline 
\textbf{$(\alpha,\beta)$}  &  \textbf{Authors}   &  \textbf{References}  \\ \hline
(1,1)  &  M.~Besge, J.~W.~L.~Glaisher, & ~ \\ 
 ~  & S.~Ramanujan  & \cite{besge,glaisher,ramanujan} \\ \hline
(1,2),(1,3),(1,4)  & J.~G.~Huard \& Z.~M.~Ou & ~ \\ 
 ~ &  \& B.~K.~Spearman \& K.~S.~Williams   & \cite{huardetal} \\ \hline
(1,5),(1,7)  & M.~Lemire \& K.~S.~Williams, & ~ \\ 
  ~  &  S.~Cooper \& P.~C.~Toh   & \cite{lemire_williams,cooper_toh} \\ \hline
(1,6),(2,3)  & S.~Alaca \& K.~S.~Williams   & \cite{alaca_williams} \\ \hline
(1,8), (1,9)  & K.~S.~Williams   & \cite{williams2006, williams2005} \\ \hline
(1,10), (1,11),(1,13), &  ~ & ~ \\ 
 (1,14)  & E.~Royer   & \cite{royer} \\ \hline
(1,12),(1,16),(1,18), &  ~ & ~ \\ 
(1,24),(2,9),(3,4), & A.~Alaca \& S.~Alaca \& K.~S.~Williams   &
\cite{alaca_alaca_williams2006,alaca_alaca_williams2007,alaca_alaca_williams2007a,alaca_alaca_williams2008}
\\ 
 (3,8)  &  ~ & ~ \\ \hline
(1,15),(3,5)  & B.~Ramakrishman \& B.~Sahu   & \cite{ramakrishnan_sahu} \\ \hline
(1,20),(2,5),(4,5)  & S.~Cooper \& D.~Ye   & \cite{cooper_ye2014} \\ \hline
(1,23)  & H.~H.~Chan \& S.~Cooper   & \cite{chan_cooper2008} \\ \hline
(1,25)  & E.~X.~W.~Xia \& X.~L.~Tian & ~ \\ 
 ~  &  \& O.~X.~M.~Yao   & \cite{xiaetal2014} \\ \hline
(1,27),(1,32)  & S.~Alaca \& Y.~Kesicio$\check{g}$lu   & \cite{alaca_kesicioglu2014} \\ \hline
(1,36),(4,9)  & D.~Ye   & \cite{ye2015} \\ \hline
\caption{Known convolution sums $W_{(\alpha, \beta)}(n)$} \label{introduction-table-1}
\end{longtable}

\begin{longtable}{|r|r|r|} \hline 
\textbf{$(a,b)$}  &  \textbf{Authors}   &  \textbf{References}  \\ \hline
(1,2)  & K.~S.~Williams   & \cite{williams2006} \\ \hline
(1,4)  & A.~Alaca \& S.~Alaca \& K.~S.~Williams   & \cite{alaca_alaca_williams2007} \\ \hline
(1,5)  & S.~Cooper \& D.~Ye   & \cite{cooper_ye2014} \\ \hline
(1,6)  & B.~Ramakrishman \& B.~Sahu   & \cite{ramakrishnan_sahu}  \\ \hline
(1,8)  & S.~Alaca \& Y.~Kesicio$\check{g}$lu   & \cite{alaca_kesicioglu2014} \\ \hline
\caption{Known representations of $n$ by the form \autoref{introduction-eq-1}
} 
\label{introduction-table-rep2}
\end{longtable}



\end{document}